\newcommand{\ignore}[1]{}
\newcommand{\abs}[1]{\left\lvert {#1} \right\rvert}
\newcommand{\norm}[1]{\left\lVert {#1} \right\rVert}
\newcommand{\C}{{\mathbb{C}}}
\newcommand{\N}{{\mathbb{N}}}
\newcommand{\sP}{{\mathcal{P}}}
\newcommand{\bfe}{\mathbf{e}}
\newcommand{\init} {\operatorname{in}}
\newtheorem{thm}{Theorem}
\newtheorem{prop}{Proposition}
\newtheorem{cor}{Corollary}
\newtheorem{lemma}{Lemma}
\theoremstyle{definition}
\newtheorem{defn}{Definition}
\theoremstyle{remark}
\newtheorem{remark}{Remark}
\author{Dusty Grundmeier}
\thanks{The first author is partially supported by NSF RTG grant DMS-1045119.}
\author{Jennifer Halfpap}
\thanks{The second author was supported in part by NSF grant DMS 1200815.}
\title{An Application of Macaulay's Estimate to CR Geometry}
\begin{document}

\maketitle

\begin{abstract}
Several questions in CR geometry lead naturally to the study of bihomogeneous polynomials $r(z,\bar{z})$ on $\C^n \times \C^n$ for which $r(z,\bar{z})\norm{z}^{2d}=\norm{h(z)}^2$ for some natural number $d$ and a holomorphic polynomial mapping  $h=(h_1, \ldots,  h_K)$ from $\C^n$ to $\C^K$. When $r$ has this property for some $d$, one seeks relationships between $d$, $K$, and the signature and rank of the coefficient matrix of $r$. In this paper, we reformulate this basic question as a question about the growth of the Hilbert function of a homogeneous ideal in $\C[z_1,\ldots,z_n]$ and apply a well-known result of Macaulay to estimate some natural quantities.
\end{abstract}

\section{Introduction}
This paper is part of a continuing effort to explore the use of commutative algebra in the study of mapping problems in CR geometry. Suppose $r(z,\bar{z})$ is a bihomogeneous polynomial of bidegree $(m,m)$ on $\C^n \times \C^n$.  It is a well-known result of Quillen \cite{Quillen}, proved independently by Catlin and D'Angelo \cite{CD}, that if $r$ is positive on the unit sphere $S^{2n-1}$, then there exists a natural number $d$ such that
\begin{equation}\label{eq: QCD result}
r(z,\bar{z})\norm{z}^{2d}=\sum_{k=1}^K |h_k(z)|^2=\norm{h(z)}^{2}
\end{equation}
for holomorphic polynomials $h_k \colon \C^n \to \C$. See \cite{CDbundles} for an interpretation of this result in terms of holomorphic line bundles.
Recent investigations (\cite{DZ}, \cite{ToYeung}) have sought results linking the minimum $d$ for which $r(z, \bar{z})\norm{z}^{2d}$ is a squared norm to properties of $r$.  This minimum $d$ must depend on the size of $r$ on the sphere and not just on the dimension $n$ and the bidegree $(m,m)$, as the example
$$(|z_1|^2-|z_2|^2)^2+\varepsilon |z_1 z_2|^2 $$
shows; in this example, the minimum $d$ tends to infinity as $\varepsilon$ tends to $0$.

Our approach is algebraic; given an $r$ and $d$ for which
\eqref{eq: QCD result} holds, we give estimates on the signature of $r$ in terms of the dimension, the degree, and $d$. As in \cite{HL:sig}, we obtain as a corollary of our main theorem a lower bound on $D_d=\binom{d+n-1}{d}$ in terms of only the signature of $r$.  Questions about the relationships among these quantities arise naturally in CR geometry.

We reformulate our basic question as a question about the growth of the Hilbert function of a homogeneous ideal in the polynomial ring $\C[z_1, \ldots ,z_n]$, and we use a well-known estimate of Macaulay (Theorem \ref{thm: Macaulay} below). Ours is not the first paper to apply such results from commutative algebra to questions in CR geometry; Grundmeier, Lebl, and Vivas use a similar set of ideas in \cite{GLV} to prove a rigidity theorem for CR mappings of hyperquadrics.   Our goal is not primarily to obtain specific inequalities but rather to illustrate how a set of ideas from this area of algebra can be brought to bear on questions arising in CR geometry.

We would like to thank John D'Angelo for encouraging us to pursue an algebraic reformulation of such questions and for many helpful conversations.

\section{Definitions and a key lemma}

Let $r$ be a real-valued bihomogeneous polynomial of bidegree $(m,m)$ on $\C^n \times \C^n$; thus $$r(z,\bar{z})=\sum_{|\alpha|=|\beta|=m} c_{\alpha \beta} z^\alpha \bar{z}^\beta$$ for some Hermitian matrix $(c_{\alpha \beta})$.  Each such polynomial  has a holomorphic decomposition
\begin{equation}\label{eq: holo decomp}
r(z,\bar{z})=\norm{f(z)}^2 - \norm{g(z)}^2 =\sum_{j=1}^{P} \abs{f_j(z)}^2 -\sum_{k=1}^{N} \abs{g_k(z)}^2
\end{equation}
where the $f_j$, $g_k$, are holomorphic homogeneous polynomials of degree $m$ and the set $\{\,f_j,g_k:1\leq j \leq P,\; 1\leq k \leq N\,\}$ is linearly independent.
This decomposition is obtained by diagonalizing the coefficient matrix $(c_{\alpha \beta})$.
Thus, although the polynomials appearing in the holomorphic decomposition of a given $r$ are not uniquely determined, the signature pair $(P,N)$ and the rank $R=P+N$ are. Observe that $r$ has signature pair $(P,0)$ if and only if it is itself a squared norm.  The idea of studying real polynomials on $\C^n$ through Hermitian linear algebra has been used extensively by D'Angelo; see Chapter VI in \cite{JPD:Carus} and \cite{JPD(scv):93}.

Our results concern the dimensions of  various vector spaces naturally associated with $r$.  Let $\sP_{k}$ be the space of homogeneous polynomials of degree $k$ in $z_1,\ldots,z_n$. Let $D_{k}=\binom{k+n -1}{k}$, which is the dimension of this space.  For any mapping $h=(h_1,\ldots,h_K) \colon \C^n \to \C^K$ with components in $\sP_{k}$, let $V_h$ be the subspace of $\sP_{k}$ spanned by the components of $h$. If $g\colon \C^n \to \C^L$ is a second mapping, then the mappings $h \oplus g$ and $h \otimes g$ satisfy
$\norm{ h \oplus g}^2=\norm{h}^2+\norm{g}^2$
and
$\norm{h \otimes g}^2 = \norm{h}^2 \norm{g}^2$.  We can think of $z$ as the mapping whose components are the coordinate functions $z_1, \ldots , z_n$; Thus $z^{\otimes d}$ is the tensor product of this mapping with itself $d$ times.

The following lemma, which appears in a more general form in \cite{JPD:05}, is used heavily.
\begin{lemma}\label{prop: VG in VF}
Let $r$ be bihomogeneous of bidegree $(m,m)$ with
$r(z,\bar{z}) \norm{z}^{2d}=\norm{h(z)}^2$. Let $F = f\otimes z^{\otimes d}$ and $G=g \otimes z^{\otimes d}$.  Then $V_G \subseteq V_F$.
\end{lemma}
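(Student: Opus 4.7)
The plan is to convert the hypothesis into a single bihomogeneous polynomial identity that exhibits $\norm{F}^2$ as the squared norm of a mapping whose components include those of $G$, and then invoke the uniqueness principle that a squared-norm decomposition determines the span of its components. The inclusion $V_G \subseteq V_F$ will then be immediate.

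First, I would start from the holomorphic decomposition $r = \norm{f}^2 - \norm{g}^2$ and multiply by $\norm{z}^{2d} = \norm{z^{\otimes d}}^2$. Using the tensor-product identity $\norm{a}^2 \norm{b}^2 = \norm{a \otimes b}^2$ on each of the two summands gives
\[
r(z,\bar z)\norm{z}^{2d} = \norm{F(z)}^2 - \norm{G(z)}^2.
\]
Combined with the hypothesis $r(z,\bar z)\norm{z}^{2d} = \norm{h(z)}^2$, I obtain
\[
\norm{F(z)}^2 = \norm{G(z)}^2 + \norm{h(z)}^2 = \norm{(G \oplus h)(z)}^2
\]
as bihomogeneous polynomials of bidegree $(m+d,m+d)$ on $\C^n \times \C^n$.

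Second, I would invoke the following uniqueness fact: if $p$ and $q$ are tuples of holomorphic polynomials of the same degree with $\norm{p(z)}^2 = \norm{q(z)}^2$, then $V_p = V_q$. To verify it, expand $p_i = \sum_\alpha a_{i\alpha} z^\alpha$, so the Hermitian coefficient matrix of $\norm{p}^2$ is $c^p_{\alpha\beta} = \sum_i a_{i\alpha}\overline{a_{i\beta}}$, and, identifying $\phi = (\phi_\alpha)$ with a linear functional on $\sP_{m+d}$, one has
\[
\sum_{\alpha,\beta} c^p_{\alpha\beta}\phi_\alpha\overline{\phi_\beta} = \sum_i \abs{\phi(p_i)}^2.
\]
The right-hand side vanishes precisely when $\phi \in V_p^\perp$, so $V_p^\perp$ is determined by $(c^p_{\alpha\beta})$, i.e., by the polynomial $\norm{p}^2$. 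Applying this with $p = F$ and $q = G \oplus h$ yields $V_F = V_{G \oplus h} = V_G + V_h \supseteq V_G$, which is the conclusion.

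No serious obstacle arises: the argument is just a tensor-product identity lifting the holomorphic decomposition of $r$ into bidegree $(m+d,m+d)$, followed by the standard fact that squared-norm decompositions determine the span of their components. The only care required is bookkeeping the ambient spaces, with $V_f, V_g \subseteq \sP_m$ but $V_F, V_G, V_h \subseteq \sP_{m+d}$.
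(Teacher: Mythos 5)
Your proof is correct, and its first half is identical to the paper's: both multiply the holomorphic decomposition $r=\norm{f}^2-\norm{g}^2$ by $\norm{z}^{2d}=\norm{z^{\otimes d}}^2$, use the tensor identity to get $r\norm{z}^{2d}=\norm{F}^2-\norm{G}^2$, and arrive at $\norm{F}^2=\norm{(G\oplus h)}^2$. Where you diverge is in how you extract $V_G\subseteq V_F$ from that identity. The paper pads $F$ and $G\oplus h$ with zero components to a common target $\C^L$ and invokes the standard fact (from D'Angelo's work) that two holomorphic polynomial maps with the same squared norm differ by a unitary, so the components of $G\oplus h$ lie in the span of those of $F$. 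You instead prove the needed uniqueness statement from scratch: the Hermitian coefficient matrix of $\norm{p}^2$ is the Gram-type matrix $c^p_{\alpha\beta}=\sum_i a_{i\alpha}\overline{a_{i\beta}}$, its associated positive semidefinite form evaluated at $\phi$ equals $\sum_i\abs{\phi(p_i)}^2$, and hence its kernel is exactly the annihilator of $V_p$; since the matrix is determined by the polynomial $\norm{p}^2$, so is $V_p$. This is a clean, self-contained argument that yields the stronger conclusion $V_F=V_{G\oplus h}$ directly (the paper's unitary argument gives the same equality, but only after also applying $U^*$), and it avoids appealing to the unitary-equivalence lemma as a black box. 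Both routes are standard Hermitian linear algebra; yours trades a citation for a short computation.
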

\begin{proof}
Note that
\begin{equation}\label{eq: r times power of norm}
r(z, \bar{z}) \norm{z}^{2d}=(\norm{f}^2 - \norm{g}^2)\norm{z}^{2d}=\norm{f\otimes z^{\otimes d}}^2 - \norm{g\otimes z^{\otimes d}}^2.
\end{equation}
Because $r(z,\bar{z})\norm{z}^{2d}=\norm{h(z)}^2$,
\begin{equation}
\norm{f \otimes z^{\otimes d}}^2= \norm{g \otimes z^{\otimes d}}^2+\norm{h}^2=\norm{(g \otimes z^{\otimes d}) \oplus h}^2,
\end{equation}
that is, $\norm{F(z)}^2=\norm{(G \oplus h)(z)}^2$.
Let $L$ be a natural number such that the rank of $F$ and the rank of $G \oplus h$ are at most $L$.  By adding identically zero components as needed, we obtain two mappings $F \oplus 0$ and $G\oplus h \oplus 0$ from $\C^n$ to $\C^L$ with the same squared norm.  Thus there exists a unitary transformation $U$ on $\C^L$ such that $U(F\oplus 0)= (G \oplus h \oplus 0)$.  It follows that
the components of the mapping $G$ are in the linear span of the components of the mapping $F$.
\end{proof}

To this point, for a mapping $h$ whose components are homogeneous of degree $m$, we have viewed $V_h$ as a vector subspace of $\sP_{m}$.  We change our perspective somewhat; we think of $S=\C[z_1,\ldots,z_n]$ as a graded ring, graded by degree. We think of $V_h$ as the homogeneous ideal generated by the components of $h$. $V_h$ is a graded $S$-module, with its degree $k$ component denoted by $(V_h)_k$.  The {\em Hilbert function} of $V_h$ is
$$H_{V_h}(k)=\dim_{\C} (V_h)_k. $$

We will make use of Macaulay's estimate on the growth of a homogeneous ideal.  Macaulay's result gives an upper bound for the Hilbert function for $S/V_h$ and hence a lower bound for the Hilbert function for $V_h$.
\begin{defn}
Let $c$ and $m$ be positive integers.  The $m$-th Macaulay representation of $c$ is the (unique) way of writing
\begin{equation}
c= \binom{k_m}{m} +\binom{k_{m-1}} {m-1} + \ldots + \binom{k_J} {J}
\end{equation}
where $k_m > k_{m-1}> \ldots > k_J \geq J > 0$. We also write
\begin{equation}
c^{<m>} = \binom{k_m + 1} {m+1} + \binom{k_{m-1} + 1}{ m } + \ldots + \binom{k_J + 1} {J + 1}.
\end{equation}
\end{defn}
See \cite{Green:gin} and \cite{GreenRestrictions} for a more extensive discussion of these ideas, including proofs of the uniqueness of the $m$-th Macaulay representation of a positive integer and of the elementary properties of the function $c \mapsto c^{<m>}$.  We need only two such properties:
\begin{lemma}\label{lemma: order preserved}
Let $b$, $c$ and $m$ be positive integers.
\begin{enumerate}

\item If
$b < c$, then $b^{<m>} <  c^{<m>}$.

\item For any $k \in \N$ with $k < c$, $(c-k)^{<m>} \leq c^{<m>} - k$.
\end{enumerate}
\end{lemma}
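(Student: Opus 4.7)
The plan is to reduce both claims to the single key inequality $(c-1)^{<m>} \leq c^{<m>} - 1$ valid for every positive integer $c$. Part (1) then follows by iteration: $b^{<m>} \leq b^{<m>} \leq \ldots \leq (c-1)^{<m>} \leq c^{<m>} - 1 < c^{<m>}$. Part (2) follows by telescoping,
\begin{equation*}
c^{<m>} - (c-k)^{<m>} = \sum_{j=0}^{k-1} \bigl( (c-j)^{<m>} - (c-j-1)^{<m>} \bigr) \geq k,
\end{equation*}
provided each summand is positive, which is exactly the key inequality applied to $c-j$ (and requires $c - j \geq 1$, guaranteed by $k < c$).

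To prove the key inequality, I would fix $c$ with $m$-th Macaulay representation $c = \binom{k_m}{m} + \binom{k_{m-1}}{m-1} + \ldots + \binom{k_J}{J}$ and determine the Macaulay representation of $c-1$ by cases on the last term. If $k_J = J$, then $\binom{k_J}{J} = 1$, so deleting this term gives the Macaulay representation of $c - 1$, and a direct comparison yields $c^{<m>} - (c-1)^{<m>} = \binom{J+1}{J+1} = 1$. If $k_J > J$, I would invoke the iterated Pascal identity
\begin{equation*}
\binom{k}{J} - 1 = \binom{k-1}{J} + \binom{k-2}{J-1} + \ldots + \binom{k-J}{1}, \qquad k \geq J+1,
\end{equation*}
to write
\begin{equation*}
c - 1 = \binom{k_m}{m} + \ldots + \binom{k_{J+1}}{J+1} + \binom{k_J-1}{J} + \binom{k_J-2}{J-1} + \ldots + \binom{k_J-J}{1}.
\end{equation*}
The strict inequalities $k_{J+1} > k_J > k_J - 1 > \ldots > k_J - J \geq 1$ show this is a valid $m$-th Macaulay representation. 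Applying $\cdot^{<m>}$ term by term, and telescoping successively via $\binom{a+1}{b+1} - \binom{a}{b+1} = \binom{a}{b}$, one computes that $c^{<m>} - (c-1)^{<m>} = 1 + (k_J - J) \geq 1$.

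The main obstacle is the combinatorial bookkeeping around the tail of the Macaulay representation: one must verify that the proposed expansion of $c-1$ really is its \emph{unique} $m$-th Macaulay representation, and then carefully track the cascade of Pascal identities after the map $c \mapsto c^{<m>}$ is applied. Once the key inequality $(c-1)^{<m>} \leq c^{<m>} - 1$ is established in both cases, the rest is routine. Both statements are standard in this circle of ideas and could alternatively be extracted verbatim from \cite{Green:gin} or \cite{GreenRestrictions}.
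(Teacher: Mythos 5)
Your argument is correct and complete. Note that the paper itself does not prove this lemma; it defers to \cite{Green:gin} and \cite{GreenRestrictions} for the elementary properties of $c \mapsto c^{<m>}$, so there is no in-paper proof to compare against, and what you have written is essentially the standard self-contained argument one extracts from Green's notes. Your reduction of both parts to the single inequality $(c-1)^{<m>} \leq c^{<m>} - 1$ is the right move, and your case analysis in fact yields the sharper identity $c^{<m>} - (c-1)^{<m>} = k_J - J + 1$, where $\binom{k_J}{J}$ is the last term of the $m$-th Macaulay representation of $c$ (the case $k_J = J$ giving exactly $1$); I verified the telescoping of Pascal identities and the validity of your proposed representation of $c-1$ in both cases. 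Two small points to tidy in a final write-up: (i) the chain displayed for part (1) has a typo ($b^{<m>} \leq b^{<m>}$) and should read $b^{<m>} \leq (b+1)^{<m>} - 1 \leq \cdots \leq c^{<m>} - (c-b)$, which proves part (2) with $k = c-b$ and part (1) simultaneously; (ii) it is worth stating explicitly that the key inequality is only ever applied to integers $a$ with $a - 1 \geq 1$ (guaranteed by $k < c$, resp.\ $b \geq 1$), so the Macaulay representation of $a-1$ always exists. The uniqueness of the Macaulay representation, which you correctly identify as the one external input needed to certify your expansion of $c-1$, is exactly what the paper already imports from Green, so relying on it introduces no circularity.
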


\begin{thm}[Macaulay's estimate on the growth of an ideal \cite{Mac1}] \label{thm: Macaulay}  Let $I$ be an ideal in $S=\C[z_1,\ldots , z_n]$ whose generators are homogeneous polynomials (not necessarily all of the same degree). Then
\begin{equation}\label{Macaulay's estimate}
H _{S/I}(k+1) \leq H_{S/I}(k)^{<k>}.
\end{equation}
\end{thm}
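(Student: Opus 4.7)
My plan is a three-step reduction. First, fix a graded monomial order (say graded reverse lex) and replace $I$ by its initial ideal $\init(I)$. Since the Hilbert function of $S/I$ coincides with that of $S/\init(I)$ in every degree, both sides of \eqref{Macaulay's estimate} are preserved, and I may assume $I$ is a monomial ideal. Second, associate to the monomial subspace $I_k\subseteq S_k$ its \emph{lex-segment} $L_k$, namely the span of the top $\dim_\C I_k$ monomials of $S_k$ in lex order. The combinatorial heart of the argument is Macaulay's compression inequality
$$\dim_\C(S_1\cdot L_k) \leq \dim_\C(S_1\cdot I_k).$$
Letting $L$ be the lex-segment ideal with $L\cap S_k=L_k$, compression gives $H_{S/L}(k+1)\geq H_{S/I}(k+1)$ while $H_{S/L}(k)=H_{S/I}(k)$ by construction. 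It therefore suffices to verify \eqref{Macaulay's estimate} when $I$ is a lex-segment ideal.

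Third, for lex-segment ideals the inequality is an identity obtained by a direct count. Write $c = H_{S/L}(k)$ in its $k$-th Macaulay representation $c=\binom{k_k}{k}+\binom{k_{k-1}}{k-1}+\cdots+\binom{k_J}{J}$. Each summand $\binom{k_j}{j}$ enumerates a specific family of degree-$k$ monomials outside $L_k$, stratified by which variable they last involve. Performing the analogous stratification one degree higher shows that the monomials in $S_{k+1}\setminus L_{k+1}$ are counted by the binomial coefficients $\binom{k_j+1}{j+1}$. Summing yields $H_{S/L}(k+1)=c^{<k>}$ exactly, completing the proof.

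The main obstacle is the compression step. I would prove it by compressing one variable at a time: for each $i$, replace $I_k$ by a space that is ``more lex with respect to $z_i$,'' and check that this operation does not increase $\dim_\C(S_1\cdot I_k)$. Iterating the single-variable compressions transforms $I_k$ into the lex-segment $L_k$. The verification that each single compression is non-increasing is elementary but delicate: one compares the multiplication-by-$z_j$ images of $I_k$ and its compression for each $j$, and Lemma \ref{lemma: order preserved} is used to ensure the Macaulay representations of the intermediate dimensions remain in the correct order. This is precisely the content of Green's treatment in \cite{Green:gin} and \cite{GreenRestrictions}, which I would follow.
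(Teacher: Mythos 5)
This theorem is imported by the paper with a citation to \cite{Mac1}; no proof is given in the text, so there is nothing internal to compare your argument against. What you have written is the standard modern proof strategy (pass to $\init(I)$, compress to a lex segment, then count exactly for lex-segment ideals), and the first and third steps are sound: Hilbert functions are preserved under taking initial ideals; $I_{k+1}\supseteq S_1\cdot I_k$ takes care of possible generators in degree $k+1$; and for a lex segment $L_k$ of codimension $c$ in $S_k$ one does verify $\dim_\C(S_1\cdot L_k)=\dim_\C S_{k+1}-c^{<k>}$ by the stratification you describe.

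The genuine gap is in your second step, which is the entire content of the theorem. First, the compression inequality $\dim_\C(S_1\cdot L_k)\leq\dim_\C(S_1\cdot I_k)$ is asserted and then outsourced to \cite{Green:gin} and \cite{GreenRestrictions}; as written you have reduced Macaulay's theorem to Macaulay's theorem. Second, the specific mechanism you propose is not correct as stated: iterating single-variable (or pairwise) compressions does \emph{not} in general transform $I_k$ into the lex segment $L_k$. Iterated compressions stabilize at a \emph{compressed} monomial set, and compressed sets need not be lex segments; the classical arguments (Macaulay, Clements--Lindstr\"om, and the Kruskal--Katona analogue) must therefore conclude with a separate direct analysis showing that fully compressed sets already satisfy the shadow bound, with low-dimensional cases handled by hand. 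Alternatively, Green's own route in \cite{Green:gin} avoids compression altogether and inducts on $n$ via generic hyperplane restrictions. Either way, the step you label ``elementary but delicate'' is where the theorem lives, and your sketch both understates it and misdescribes how the iteration terminates. Also, Lemma \ref{lemma: order preserved} concerns the numerical function $c\mapsto c^{<m>}$ and plays no role in verifying that a single compression does not increase the size of $S_1\cdot I_k$; that verification is purely combinatorial.
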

Elementary linear algebra shows that
\begin{equation}
H_{S/I}(k)+H_{I} (k)={k+n-1 \choose k}=D_{k}.
\end{equation}

\section{The main theorem and its corollaries}

In this section, we establish our main result concerning signatures of real polynomials on $\C^n$. See \cite{HL:sig} for a discussion of connections of this problem to CR geometry and for constructions of families of polynomials illustrating the extent to which some such inequalities are sharp. Additional comments on the sharpness of these inequalities appear at the end of the section.
\begin{thm}\label{thm: result on signature}
 Let $r$ be bihomogeneous of bidegree $(m,m)$ with signature $(P,N)$ and rank $R$. Suppose $r(z, \bar{z})\norm{z}^{2d}$ is a squared norm. Then
\begin{equation}\label{eq: first form of ineq in thm}
P \geq \frac{D_{m+d}-[[ H(m)^{<m>}]^{<m+1>} \cdots]^{<m+d-1>}}{D_d}
\end{equation}
for $H(m)=D_m - R$.
If the $m$-th Macaulay representation of $D_{m}- R$ is
$\sum_{j=J}^m \binom{k_j}{j}$, this is equivalent to
\begin{equation}
P \geq \frac{D_{m+d}-\sum_{j=J}^m \binom{k_j + d}{j+d}}{D_{d}}.
\end{equation}
\end{thm}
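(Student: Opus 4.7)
The plan is to reduce the desired bound on $P$ to an estimate on the Hilbert function of a single homogeneous ideal and then iterate Macaulay's Theorem \ref{thm: Macaulay}. First, I would write the holomorphic decomposition $r=\norm{f}^2-\norm{g}^2$ from \eqref{eq: holo decomp}, with $f=(f_1,\ldots,f_P)$ and $g=(g_1,\ldots,g_N)$ linearly independent. Applying Lemma \ref{prop: VG in VF} with $F=f\otimes z^{\otimes d}$ and $G=g\otimes z^{\otimes d}$ yields $V_G\subseteq V_F$ in $\sP_{m+d}$. Shifting to the ideal perspective, let $V_f$ and $V_g$ denote the homogeneous ideals in $S$ generated by the $f_j$ and $g_k$ respectively. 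Since the generators have degree $m$, the graded piece $(V_f)_{m+d}$ is precisely the span of the products $f_jz^\alpha$ with $|\alpha|=d$, i.e., the vector space $V_F$, and analogously $(V_g)_{m+d}=V_G$; so the lemma gives $(V_g)_{m+d}\subseteq (V_f)_{m+d}$.

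The key object is the sum $I=V_f+V_g$. At degree $m$, the generators span $I_m$ and are linearly independent by hypothesis, so $H_I(m)=R$ and $H_{S/I}(m)=D_m-R=H(m)$. At degree $m+d$, the containment just established yields $I_{m+d}=(V_f)_{m+d}+(V_g)_{m+d}=(V_f)_{m+d}$, which is spanned by the $PD_d$ products $\{f_jz^\alpha : 1\leq j\leq P,\; |\alpha|=d\}$; hence $H_I(m+d)\leq PD_d$.

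Next I apply Macaulay's estimate at each degree $m,m+1,\ldots,m+d-1$, composing the inequalities using the monotonicity of $c\mapsto c^{<m>}$ from Lemma \ref{lemma: order preserved}(1). The result is
\begin{equation*}
H_{S/I}(m+d)\leq [[H(m)^{<m>}]^{<m+1>}\cdots]^{<m+d-1>},
\end{equation*}
so that $PD_d\geq H_I(m+d)=D_{m+d}-H_{S/I}(m+d)\geq D_{m+d}-[[H(m)^{<m>}]^{<m+1>}\cdots]^{<m+d-1>}$, which is the first form after dividing by $D_d$. For the second form, I would observe that $c\mapsto c^{<m>}$ preserves the shape of a Macaulay representation: if $c=\sum_{j=J}^m\binom{k_j}{j}$ with $k_m>\cdots>k_J\geq J>0$, then by definition $c^{<m>}=\sum_{j=J}^m\binom{k_j+1}{j+1}$, whose upper indices still form a strictly decreasing sequence bounded below by $J+1$, so this is already the $(m+1)$-th Macaulay representation of $c^{<m>}$. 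Iterating $d$ times shifts every top and bottom entry by $d$, producing $\sum_{j=J}^m\binom{k_j+d}{j+d}$, which substituted into the first form yields the second.

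The main obstacle is the initial reframing: one must identify $V_F$ and $V_G$ with the degree-$(m+d)$ pieces of honest ideals so that Lemma \ref{prop: VG in VF} collapses $I_{m+d}$ onto $(V_f)_{m+d}$ alone. This collapse is what allows the upper bound on $H_I(m+d)$ to involve only $P$ rather than $R$. Once this translation is made, Macaulay's theorem does the quantitative work, and the Macaulay-representation formula is a direct calculation from the definition of $c^{<m>}$.
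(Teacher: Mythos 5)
Your proposal is correct and follows essentially the same route as the paper: the ideal $I=V_f+V_g$ you introduce is exactly the ideal $V_{f\oplus g}$ used there, the upper bound $H_I(m+d)\leq PD_d$ and the iterated Macaulay lower bound are identical, and your observation that $c\mapsto c^{<m>}$ shifts the Macaulay representation is precisely the paper's computation of $\sum_{j=J}^m\binom{k_j+d}{j+d}$.
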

\begin{proof}
We obtain upper and lower bounds for $H_{V_f}(m+d)$, which equals $H_{V_{f \oplus g}}(m+d)$ since, by Proposition \ref{prop: VG in VF}, the components of $V_f$ and $V_{f\oplus g}$ in degree $m+d$ are the same.  Since $(V_f)_m$ has as a basis the $P$ generators $f_j$ of the ideal $V_f$, $(V_f)_{m+d}$ is spanned by the polynomials $z^\alpha f_j$ for $|\alpha|=d$. We thus have the upper bound
$$H_{V_f}(m+d)\leq D_{d} P .$$
We use Theorem \ref{thm: Macaulay} to obtain an upper bound on $H_{S/V_{f \oplus g}}(m+d)$.  To simplify notation, we drop the subscript on $H$ when discussing the Hilbert function for $S/V_{f \oplus g}$.
Since $H_{V_{f \oplus g}}(m)= R$,
\begin{equation}\label{eq: H for coord ring in deg m}
H(m)=D_{m}-R.
\end{equation}
Suppose the $m$-th Macaulay representation of $D_{m}- R$ is $\sum_{j=J}^m \binom{k_j}{j}$.
According to Macaulay's estimate,
\begin{equation}
H(m+1)\leq H(m)^{<m>}=\sum_{j=J}^m \binom{k_j+1}{j+1}.
\end{equation}
Applying the result again and using Lemma \ref{lemma: order preserved} gives
\begin{equation}
H(m+2) \leq H(m+1)^{<m+1>} \leq [H(m)^{<m>}]^{<m+1>}= \left(\sum_{j=J}^m \binom{k_j+1}{j+1} \right)^{<m+1>}.
\end{equation}
Since the expression in parentheses is already in the form of the $(m+1)$st Macaulay representation of an integer, the full expression on the right can be easily evaluated and we obtain:
\begin{equation}
H(m+2)\leq \sum_{j=J}^m \binom{k_j+2}{j+2}.
\end{equation}
Iterating gives
\begin{equation}
H(m+d)\leq [[H(m)^{<m>}]^{<m+1>} \cdots ]^{<m+d-1>} = \sum_{j=J}^m \binom{k_j+d}{j+d}
\end{equation}
Since $H_{V_{f \oplus g}}(m+d) = D_{m+d}-H(m+d)$, the result follows.
\end{proof}


We give two corollaries. In both cases, a simple direct proof also exists that does not require Macaulay's estimate.
We sketch both arguments for Corollary \ref{cor: full rank}. Furthermore, since we always use the upper bound
$$H_{V_f}(m+d) \leq D_{d} P $$
when $f$ has $P$ components, we need only describe how we obtain the lower bounds.
\begin{cor}\label{cor: first}
 Let $r$ be bihomogeneous of bidegree $(m, m)$ with signature pair $(P,N)$ and rank $R$.  If $r(z,\bar{z})\norm{z}^{2d}$ is a squared norm, then
\begin{equation}
N \leq (P-1)(D_d-1).
\end{equation}
\end{cor}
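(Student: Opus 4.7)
The plan is to combine the upper bound $H_{V_{f\oplus g}}(m+d) = H_{V_f}(m+d) \leq D_d P$, obtained exactly as in the proof of Theorem~\ref{thm: result on signature} from Lemma~\ref{prop: VG in VF} and the fact that $(V_f)_{m+d}$ is spanned by the $D_d P$ products $z^\alpha f_j$, with the matching lower bound
$$H_{V_{f\oplus g}}(m+d) \geq D_d + R - 1.$$
Granting this lower bound, one has $D_d P \geq D_d + R - 1$, and substituting $R = P + N$ rearranges to the desired $N \leq (P-1)(D_d - 1)$.

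To prove the lower bound I would first perform a linear change of coordinates on $\C^n$ so that the coefficient of $z_1^m$ in $f_1$ is nonzero, equivalently so that $z_1 \nmid f_1$. Such a change exists because $f_1$ is a nonzero homogeneous polynomial (we may assume $P \geq 1$, else the corollary is trivial), and it preserves the Hilbert functions of the relevant ideals; the case $n=1$ is vacuous, so assume $n \geq 2$. In these coordinates, $\sP_d \cdot f_1$ is a $D_d$-dimensional subspace of $(V_{f\oplus g})_{m+d}$, since multiplication by $f_1$ is injective on $\sP_d$ in the integral domain $\C[z_1,\ldots,z_n]$.

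I would then adjoin the $R - 1$ elements $z_1^d f_2, \ldots, z_1^d f_P, z_1^d g_1, \ldots, z_1^d g_N$ of $(V_{f\oplus g})_{m+d}$ and verify that they are linearly independent modulo $\sP_d \cdot f_1$. The crucial step is the following: if
$$z_1^d \Bigl(\sum_{j=2}^P a_j f_j + \sum_{k=1}^N b_k g_k\Bigr) = p f_1$$
for some $p \in \sP_d$, then since $\C[z_1,\ldots,z_n]$ is a UFD with $\gcd(z_1^d, f_1) = 1$, the polynomial $h := \sum_{j \geq 2} a_j f_j + \sum_k b_k g_k$ must be divisible by $f_1$; comparing degrees forces $h = c f_1$ for some $c \in \C$, and the linear independence of $\{f_1,\ldots,f_P, g_1,\ldots,g_N\}$ then forces $c = 0$ and every $a_j, b_k = 0$.

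The main obstacle is this UFD divisibility step, which is precisely what the preliminary coordinate change is arranged to enable. Together, the subspace $\sP_d \cdot f_1$ of dimension $D_d$ and the $R-1$ elements linearly independent modulo it produce the lower bound $H_{V_{f\oplus g}}(m+d) \geq D_d + R - 1$, completing the argument.
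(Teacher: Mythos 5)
Your proof is correct, but it takes a genuinely different route from the paper's. The paper derives Corollary~\ref{cor: first} from Theorem~\ref{thm: result on signature}: it keeps the Macaulay lower bound, uses part (ii) of Lemma~\ref{lemma: order preserved} to peel off $R-1$ from $[[(D_m-R)^{<m>}]\cdots]^{<m+d-1>}$, and then evaluates the remaining iterated expression via the identity $D_m=\sum_{k=0}^m\binom{k+n-2}{k}$ to arrive at the same key inequality $D_dP\geq D_d+R-1$. You instead establish the lower bound $H_{V_{f\oplus g}}(m+d)\geq D_d+R-1$ directly and elementarily: after a linear change of coordinates making $z_1\nmid f_1$, the space $\sP_d\cdot f_1$ contributes $D_d$ dimensions (multiplication by $f_1$ is injective since $S$ is a domain), and the $R-1$ elements $z_1^df_2,\ldots,z_1^dg_N$ are independent modulo it by the UFD argument together with the linear independence of $\{f_j,g_k\}$ built into the holomorphic decomposition. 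This is exactly the kind of ``simple direct proof \ldots\ that does not require Macaulay's estimate'' which the paper asserts exists for both corollaries but only sketches for Corollary~\ref{cor: full rank}; your version supplies it for Corollary~\ref{cor: first}. What the paper's route buys is uniformity (the corollary falls out of the general theorem with no new geometric input); what yours buys is self-containedness and a transparent reason for the bound. Two trivial caveats: the case $n=1$ is not vacuous but the inequality is immediate there since $R\leq D_m=1$; and both your proof and the paper's implicitly assume $r\not\equiv 0$ (equivalently $P\geq 1$, since $r\norm{z}^{2d}=\norm{h}^2\geq 0$ forces $P\geq 1$ for nonzero $r$) --- note that for $r\equiv 0$ the stated inequality actually fails when $D_d>1$, so ``trivial'' is not quite the right dismissal of that degenerate case.
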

\begin{proof}
First, observe that
\begin{equation}
D_{m} = \sum_{k=0}^{m} \binom{k+n-2}{k}.
\end{equation}
This follows by counting the monomials of degree $m$ in $n$ variables in two ways.  On the one hand, this number is $D_{m}$. On the other hand, we may count the monomials of degree $m$ involving $x_1^{m-k}$ for $0 \leq k \leq m$, which gives $\binom{k+n-2}{k}$. Sum over $k$.

By Theorem \ref{thm: result on signature},
$$D_d P \geq D_{m+d}- [[[D_m - R]^{<m>}]^{<m+1>} \cdots ]^{<m+d-1>}.$$
By statement (ii) in Lemma \ref{lemma: order preserved} and the above expression for $D_m$,
\begin{eqnarray*}
\lefteqn{D_{m+d}-[[[D_m - R]^{<m>}]^{<m+1>} \cdots ]^{<m+d-1>}} &&\\
&\geq& D_{m+d}-[[[D_m-1]^{<m>}]^{<m+1>} \cdots]^{<m+d-1>} + R - 1\\
&=&\sum_{k=0}^{m+d} \binom{k+n-2}{k} - \left[ \left[ \left[\sum_{k=1}^{m} \binom{k+n-2}{k}\right]^{<m>}\right]^{<m+1>} \cdots \right]^{<m+d-1>} +R-1\\
&=&\sum_{k=0}^{m+d} \binom{k+n-2}{k} - \sum_{k=1}^{m} \binom{k+n-2+d}{k+d} +R-1\\
&=&\sum_{k=0}^d \binom{k+n-2}{k}+R-1\\
&=&D_d + R - 1.
\end{eqnarray*}
Thus
$$\frac{P}{R} \geq \frac{D_d + R - 1}{D_d R},$$
which is equivalent to
$$N \leq (P-1)(D_d - 1) .$$
\end{proof}
\begin{remark}
The inequality in Corollary \ref{cor: first} immediately implies $N \leq P(D_d - 1)$.
The latter appears with a different proof in Theorem 1.1(i) in \cite{HL:sig}.
\end{remark}

\begin{cor}\label{cor: full rank}
 Let $r$ be bihomogeneous of bidegree $(m,m)$ with $r(z,\bar{z}) \norm{z}^{2d}$ a squared norm  and $R=D_{m}$  (which is the largest rank possible for this $m$ and $n$).  Then
\begin{equation}\label{eq: n and m}
P \geq \frac{D_{m+d}}{D_{d}}.
\end{equation}
\end{cor}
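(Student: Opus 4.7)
The plan is to apply Theorem \ref{thm: result on signature} in the degenerate case where the ideal $V_{f\oplus g}$ is as large as possible in degree $m$. Concretely, when $R = D_m$, the components of $f$ together with the components of $g$ form a linearly independent set of size $D_m$ in $\sP_m$, so they span all of $\sP_m$. This forces the Hilbert function $H(m)$ of $S/V_{f\oplus g}$ in degree $m$ to vanish, which I expect to collapse the Macaulay iteration in \eqref{eq: first form of ineq in thm} down to the clean bound claimed.

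First I would instantiate Theorem \ref{thm: result on signature} with $H(m) = D_m - R = 0$. Since $H(m) = 0$, there is no $m$-th Macaulay representation with positive integer summands, and one checks immediately that the iterated bracket $[[H(m)^{<m>}]^{<m+1>}\cdots]^{<m+d-1>}$ vanishes. Plugging into \eqref{eq: first form of ineq in thm} yields $P \geq D_{m+d}/D_d$ at once.

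The alternative direct proof (which the authors advertise) bypasses Macaulay: because $f_1,\ldots,f_P,g_1,\ldots,g_N$ span $\sP_m$, the products $z^\alpha f_j$ and $z^\alpha g_k$ with $|\alpha| = d$ span $\sP_{m+d}$, so $(V_{f\oplus g})_{m+d} = \sP_{m+d}$ has dimension $D_{m+d}$. By Lemma \ref{prop: VG in VF}, $(V_g)_{m+d} \subseteq (V_f)_{m+d}$, whence $(V_f)_{m+d} = (V_{f\oplus g})_{m+d}$. Since $(V_f)_{m+d}$ is spanned by the $D_d P$ products $z^\alpha f_j$, one concludes $D_{m+d} \leq D_d P$, as desired.

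There is essentially no obstacle here: the whole point is that the full-rank hypothesis forces the ``upper bound on $H(m+d)$'' step of the main theorem to be trivial, and what remains is just the unconditional inequality $H_{V_f}(m+d) \leq D_d P$. The only minor care needed is the convention that the Macaulay symbol $0^{<m>}$ equals $0$, which I would note in passing.
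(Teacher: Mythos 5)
Your proposal is correct and matches the paper's own treatment, which likewise notes that $D_m - R = 0$ collapses the numerator of \eqref{eq: first form of ineq in thm} to $D_{m+d}$ and also sketches the same direct spanning argument via Lemma \ref{prop: VG in VF}. The only addition you make is the explicit remark that $0^{<m>}=0$, which is a reasonable point of care but does not change the argument.
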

\begin{remark}
Suppose $n=2$.  Then \eqref{eq: n and m} gives $P/R \geq (m+d+1)/(d+1)(m+1)$.  See \cite{HL:sig} for an example of a polynomial in two variables for which this ratio is achieved.
\end{remark}
\begin{proof}
To obtain a proof using Theorem \ref{thm: result on signature}, we need only observe that $D_{m}-R=0$ and so the numerator of \eqref{eq: first form of ineq in thm} reduces to $D_{m+d}$.

Next we give a direct proof for the lower bound on
$H_{V_f}(m+d)=H_{V_{f \oplus g}}(m+d)$. If the rank of $r$ is $D_{m}=\dim \sP_{m}$, then in fact the components of $f \oplus g$ span $\sP_{m}$.  Thus the components of $F \oplus  G$ span $\sP_{m+d}$, i.e.,
$$H_{V_{f \oplus g}}(m+d) = D_{m+d}.$$ Then
$$P \geq \frac{D_{m+d}}{D_{d}}.$$
\end{proof}

We examine our method of proof more closely:  
We obtain  the upper bound $H_{V_f}(m+d) \leq D_d P$ because the $D_d P$ functions $ z^\alpha f_j $ for $ 1\leq j\leq P$ and $ |\alpha|=d $ span $(V_f)_{m+d}$. We thus have equality if and only if these functions are linearly independent.
On the other hand, we obtain our lower bound for $H_{V_{f\oplus g}}(m+d)$ using Macaulay's theorem.  It is considerably more difficult to say when equality holds here. (See Green \cite{Green:gin} for a more detailed discussion.)  We make a definition:
\begin{defn}
Let $I$ be a monomial ideal. $I$ is a {\bf lex segment ideal} in degree $s$ if $I_s$ is spanned by the first $\dim (I_s)$ monomials of degree $s$ in lexicographic order.
\end{defn}
When the $R$ generators of a monomial ideal $I$ are all of the same degree $m$, $I$ is a lex segment ideal in all degrees if and only if the generators are the first $R$ monomials of degree $m$ in lexicographic order.
The following results are known.
\begin{prop}[Macaulay]
If $I$ is a lex segment ideal in degree $s$ and has no generators in degree $s+1$, $H_{S/I}( s+1)=H_{S/I}(s)^{<s>}$.
\end{prop}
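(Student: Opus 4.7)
The plan is to compute $H_{S/I}(s+1)$ directly from the lex segment structure and match it to $H_{S/I}(s)^{<s>}$. Write $H=H_{S/I}$ and $c=H(s)$. Since $I$ has no generators in degree $s+1$, $I_{s+1}=S_1\cdot I_s$, so a degree-$(s+1)$ monomial $z^\alpha$ lies in $I_{s+1}$ iff some degree-$s$ divisor $z^\alpha/z_i$ lies in $I_s$. Because the lex segment $I_s$ is upward closed under lex among degree-$s$ monomials, this reduces to requiring that the lex-\emph{largest} such divisor lie in $I_s$; comparing exponent vectors shows this largest divisor is $z^\alpha/z_{\ell(\alpha)}$ with $\ell(\alpha)=\max\{i:\alpha_i>0\}$. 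Inverting, the degree-$(s+1)$ monomials not in $I_{s+1}$ correspond bijectively to pairs $(z^\beta,j)$ with $z^\beta\notin I_s$ of degree $s$ and $j\in\{\ell(\beta),\ldots,n\}$ via $(z^\beta,j)\mapsto z^\beta z_j$. Thus
\[
H(s+1)=\sum_{\substack{|\beta|=s\\ z^\beta\notin I_s}}(n-\ell(\beta)+1).
\]

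Next I pass to the standard bijection between degree-$s$ monomials in $n$ variables and $s$-subsets $\{a_1<\cdots<a_s\}$ of $[N]$ with $N=n+s-1$, given by $z^\beta\mapsto\{i_1,i_2+1,\ldots,i_s+s-1\}$ when $(i_1\le\cdots\le i_s)$ lists the variables of $z^\beta$ with multiplicity. Under this bijection, descending lex on monomials corresponds to ascending lex on subsets, so the lex complement of $I_s$ corresponds to the $c$ lex-\emph{largest} such subsets; moreover $\ell(\beta)=a_s-s+1$, whence $n-\ell(\beta)+1=N+1-a_s$. Given the Macaulay representation $c=\sum_{j=J}^s\binom{k_j}{j}$, the top $c$ subsets admit a greedy layering: layer $j$ consists of the $\binom{k_j}{j}$ subsets with prescribed prefix $a_1=N-k_s,\ldots,a_{s-j}=N-k_{j+1}$ and a $j$-element free tail in $\{N-k_j+1,\ldots,N\}$. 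The strict inequalities $k_s>k_{s-1}>\cdots>k_J$ keep the fixed prefix strictly below each successive free window, so these layers partition the top $c$ subsets.

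Finally, within layer $j$ the free tail determines $a_s$, and summing $N+1-a_s$ over the $\binom{k_j}{j}$ subsets collapses via the hockey-stick identity $\sum_{v=j}^{k_j}\binom{v}{j}=\binom{k_j+1}{j+1}$ to $\binom{k_j+1}{j+1}$. Adding across layers yields $\sum_{j=J}^s\binom{k_j+1}{j+1}=c^{<s>}$, as required. The main obstacle is the layer identification: verifying that lex-greedy selection of the top $c$ subsets exactly reproduces the Macaulay cascade, which follows by a short induction on the number of Macaulay terms using the strict monotonicity of $(k_j)$.
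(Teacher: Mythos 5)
Your proof is correct, and it is necessarily a ``different route'' from the paper's, because the paper offers no proof at all: this proposition is listed among results that ``are known'' and is used as a black box. Your argument is a genuine, self-contained combinatorial proof. The two key reductions are both sound: (i) since $I$ is a monomial ideal with no generators in degree $s+1$, $I_{s+1}=S_1\cdot I_s$, and since $I_s$ is an up-set in lex order, membership of $z^\alpha$ in $I_{s+1}$ is decided by its lex-largest degree-$s$ divisor, which is $z^\alpha/z_{\ell(\alpha)}$ (dividing by the latest variable present raises the monomial in lex order); this gives the exact count $H(s+1)=\sum_{z^\beta\notin I_s}(n-\ell(\beta)+1)$, with the bijection $(z^\beta,j)\mapsto z^\beta z_j$, $j\ge\ell(\beta)$, recoverable via $j=\ell(\beta z_j)$. (ii) The passage to $s$-subsets of $[N]$, $N=n+s-1$, correctly converts the weight to $N+1-a_s$, and the layer-by-layer evaluation $\sum_{T}(N+1-\max T)=\binom{k_j+1}{j+1}$ over $j$-subsets of a $k_j$-element terminal window is a correct double application of the hockey-stick identity. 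The only step you defer --- that the $c$ lex-largest $s$-subsets decompose into the cascade of layers indexed by the Macaulay representation --- is exactly the standard structure theorem behind the existence and uniqueness of that representation; the induction you indicate (peel off the $\binom{k_s}{s}$ subsets contained in the top $k_s$ elements, then recurse on subsets with $a_1=N-k_s$, using $k_{j-1}<k_j$ to keep the prefix below each successive free window) does go through, so this is an acceptable appeal to a known fact rather than a gap. What your approach buys is transparency: it makes visible why equality (rather than Macaulay's inequality) holds for lex segments, namely that every product $z_jz^\beta$ with $j\ge\ell(\beta)$ lands on a distinct monomial outside $I_{s+1}$, which is the same mechanism the paper exploits informally in its later discussion of when its upper bound $H_{V_f}(m+d)\le D_dP$ is attained.
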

\begin{thm}[Gotzmann's Persistence Theorem] Let $I$ be a homogeneous ideal generated in degree $\leq s$.  If $H_{S/I}(s+1)=H_{S/I}(s)^{<s>}$, then $H_{S/I}(k+1)=H_{S/I}(k)^{<k>} $
for all $k \geq s$.
\end{thm}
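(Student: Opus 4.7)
The plan is to reduce from an arbitrary homogeneous ideal to a combinatorially tractable one, namely a Borel-fixed monomial ideal, and then to propagate the Macaulay equality using the fine structure of such ideals.

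First, I would replace $I$ by its generic initial ideal $J = \operatorname{gin}(I)$ with respect to the reverse-lexicographic order on $S=\C[z_1,\ldots,z_n]$. By Galligo's theorem, after a generic change of coordinates, $\operatorname{in}(I)$ is a monomial ideal that is \emph{Borel-fixed}, meaning it is invariant under the standard Borel subgroup of $GL_n(\C)$. Crucially, passing to the initial ideal preserves the Hilbert function in every degree, so $H_{S/J}(k) = H_{S/I}(k)$ for all $k$, and $J$ is still generated in degree at most $s$. It therefore suffices to prove the persistence statement under the additional hypothesis that $I=J$ is a Borel-fixed monomial ideal.

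Second, I would introduce the lex segment ideal $L$ of $S$ having the same Hilbert function as $J$ through degree $s$. By Macaulay's theorem (and its converse, the first proposition preceding the statement), the equality $H_{S/J}(s+1)=H_{S/J}(s)^{<s>}$ forces $L$ to have no generators in degree $s+1$, so $L$ itself satisfies $H_{S/L}(k+1)=H_{S/L}(k)^{<k>}$ for every $k \geq s$ by a direct combinatorial computation from the definition of the Macaulay representation and of lex segments. The task thereby reduces to showing that $H_{S/J}(k) = H_{S/L}(k)$ persists for all $k \geq s$, not just at $k=s,\,s+1$.

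Third, I would carry out the persistence step via Green's hyperplane restriction theorem. For a generic linear form $\ell$, Green gives the refined bound $H_{S/(J+(\ell))}(k) \leq H_{S/J}(k)_{<k>}$, where $c_{<k>}$ denotes the Macaulay representation with each top index decreased by one. Combining this with Macaulay's estimate for $S/J$ and for the hyperplane section $S/(J+(\ell))$, together with the Borel-fixed property (which ensures that $\ell$ can be chosen as a coordinate $z_n$ and that quotienting by it commutes nicely with the graded structure), one obtains a chain of inequalities that degenerate to equalities precisely when the hypothesis of the theorem is assumed. Induction on $n$, with the case $n=1$ trivial, then propagates the equality from degree $s+1$ to every subsequent degree.

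The main obstacle is this last step: showing that the single equality at degree $s$ rigidifies the whole subsequent Hilbert function. Keeping track of how the Macaulay representation of $H_{S/J}(k)$ transforms under both $c \mapsto c^{<k>}$ and $c \mapsto c_{<k>}$, and ensuring that the inequality from Green and the inequality from Macaulay saturate simultaneously, is where the genuine content of Gotzmann's argument lies; the reductions in the first two paragraphs are essentially formal.
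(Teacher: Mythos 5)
The paper does not prove this statement at all: Gotzmann's Persistence Theorem is quoted as a known result (with the surrounding discussion pointing to \cite{Green:gin} and \cite{GreenRestrictions}), so there is no internal proof to compare against. Judged on its own terms, your outline correctly identifies the standard route (Green's proof via generic initial ideals and the hyperplane restriction theorem), but it has a genuine gap: the entire content of the theorem is deferred. You say explicitly that the saturation of Green's inequality ``is where the genuine content of Gotzmann's argument lies,'' and that step is never carried out, so what you have is a plan, not a proof.

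Two of the steps you label ``essentially formal'' are in fact where the difficulty sits. First, passing to $J=\operatorname{gin}(I)$ preserves the Hilbert function, but it does \emph{not} automatically preserve ``generated in degree $\leq s$'': initial ideals generally acquire generators in higher degrees, and showing that $\operatorname{gin}(I)$ has no generators beyond degree $s$ under the Gotzmann hypothesis is precisely Green's crystallization principle --- itself a nontrivial theorem whose proof uses the Borel-fixed structure. (What you can get cheaply is only the single equality $S_1\cdot J_s = J_{s+1}$, from $\dim S_1 J_s \leq \dim J_{s+1} = H_I(s+1)$ together with Macaulay's lower bound applied to $J_s$.) Second, your step with the lex ideal $L$ is circular as written: $L$ is defined to have the same Hilbert function as $J$ in all degrees, so knowing that $L$ has no generators in any degree $>s$ (which is what you need to compute $H_{S/L}(k+1)=H_{S/L}(k)^{<k>}$ for all $k\ge s$) already presupposes the persistence of the Hilbert-function equality that you are trying to prove; the hypothesis only rules out generators of $L$ in degree $s+1$. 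To repair the argument you would need to either prove the crystallization principle for Borel-fixed revlex gins, or run Gasharov's elementary induction, or carry out Green's double-inequality argument in detail --- any of which is a substantial piece of mathematics not present in the proposal.
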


With lexicographic order on the monomials and for a homogeneous ideal $I$, let $\init(I)$ be its initial ideal.  This is a monomial ideal.
Then $I$ and $\init(I)$ have the same Hilbert function in all degrees.
We immediately obtain the following proposition describing a hypothesis under which we obtain an equality upon application of Macaulay's theorem.
\begin{prop} Let $r=\norm{f}^2 - \norm{g}^2$ be as in Theorem \ref{thm: result on signature}.
Suppose $\init(V_{f \oplus g})$ is a lex segment ideal.  Then
\begin{equation}
H_{V_{f \oplus g}}(m+d)=D_{m+d}-[[H(m)^{<m>}]^{<m+1>}\ldots]^{<m+d-1>}.
\end{equation}
\end{prop}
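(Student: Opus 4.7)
The plan is to pass to the initial ideal of $V_{f\oplus g}$ and, under the lex segment hypothesis, convert each application of Macaulay's estimate into an equality by combining Macaulay's proposition with Gotzmann's persistence theorem.

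First I would set $J=\init(V_{f\oplus g})$. Since a homogeneous ideal and its initial ideal have the same Hilbert function in every degree, it suffices to prove
$$H_{S/J}(m+d)=[[H(m)^{<m>}]^{<m+1>}\cdots]^{<m+d-1>}$$
and then translate via $H_{V_{f\oplus g}}(m+d)=D_{m+d}-H_{S/J}(m+d)$. Since each component of $f\oplus g$ is homogeneous of degree $m$, the ideal $V_{f\oplus g}$ is generated in degree $m$, so $\dim J_m = R$ and $H_{S/J}(m)=D_m-R=H(m)$.

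Next I would apply Macaulay's proposition at $s=m$. The lex segment hypothesis forces $J_m$ to be the initial segment of the first $R$ monomials of degree $m$ in lex order and, read in the strong sense, forces $J$ to have no generators beyond degree $m$. Macaulay's proposition then yields
$$H_{S/J}(m+1)=H_{S/J}(m)^{<m>}=H(m)^{<m>}.$$
Since $J$ is generated in degrees $\leq m$ and Macaulay's estimate is saturated at $m+1$, Gotzmann's persistence theorem applies and propagates this to the equality $H_{S/J}(k+1)=H_{S/J}(k)^{<k>}$ for every $k\geq m$. Iterating $d$ times and performing the same manipulation of the Macaulay representation as in the proof of Theorem~\ref{thm: result on signature} gives the claimed formula.

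The delicate step I expect to be the main obstacle is verifying that ``$\init(V_{f\oplus g})$ is a lex segment ideal'' rules out higher-degree generators of $J$; this is exactly what is needed to invoke Macaulay's proposition and to feed Gotzmann's persistence theorem in the form stated in the paper. Once that is pinned down, the rest of the argument is a short iteration followed by the same bookkeeping already performed in the proof of Theorem~\ref{thm: result on signature}.
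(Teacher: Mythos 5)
Your proposal is correct and is exactly the argument the paper has in mind: the paper offers no written proof (the proposition is declared to follow ``immediately'' from the three preceding facts), and your chain --- pass to $\init(V_{f\oplus g})$, which has the same Hilbert function as $V_{f\oplus g}$; apply Macaulay's equality for lex segment ideals at $s=m$; propagate with Gotzmann's persistence theorem; iterate $d$ times and do the Macaulay-representation bookkeeping from Theorem~\ref{thm: result on signature} --- is precisely how those facts combine. The one point you flag, that the hypothesis must exclude generators of $\init(V_{f\oplus g})$ in degrees above $m$ (otherwise the first application of Macaulay's proposition, and hence the claimed equality, would fail), is indeed what the hypothesis is meant to encode, in view of the paper's remark that a lex segment ideal whose generators all lie in degree $m$ consists of the first $R$ monomials of degree $m$ in lexicographic order; so your ``strong'' reading is the intended one.
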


We now consider whether Corollaries 1 and 2 are sharp for all $n$, $d$, and $m$. In Corollary 2, $H_{V_{f \oplus g}}(m+d)=D_{m+d}$, i.e., we have an equality rather than merely a lower bound.  Thus we ask whether we can have equality in the upper bound
$H_{V_{f \oplus g}}(m+d) \leq D_d P$.
In the monomial case the question is whether there are $D_{m+d}/D_d$ monomials $x^{\beta}$ such that the $D_{m+d}$ monomials $x^{\alpha+\beta}$ for $|\alpha|=d$ are distinct.
When $n=2$, such a collection of monomials sometimes exists. Indeed, take $d$ a natural number and consider $m=k(d+1)$ for some natural number $k$. Then the $k+1$ monomials
$x^{m-j(d+1)}y^{j(d+1)}$, $ 0 \leq j \leq k $
have this property.
On the other hand, for $n>2$, even when $d=1$, it is not possible to find such a collection of monomials.



Next we consider Corollary 1, which implies
\begin{equation}
\frac{N}{P} \leq D_d -1.
\end{equation}
In \cite{HL:sig}, it is shown that this inequality is essentially sharp for $d=1$ for all $n$.  This is not inconsistent with the above discussion; the family of polynomials constructed there has several interesting properties.
\begin{enumerate}
\item $V_{f \oplus g}$ is a monomial ideal and includes all monomials of degree $m$ in $n$ variables.  Thus $H_{V_{f \oplus g}} (m+d) = D_{m+d}$.

\item Consider the set $B$ of all components $x^\beta$ of $f$ for which $\beta_j \geq 1$ for all $1 \leq j \leq n$. The monomials $x^{\beta+\bfe^k}$ for $x^\beta \in A$ and $1\leq k \leq n$ are all distinct.

\end{enumerate}
Thus although there are components of $f$ giving rise to the same monomial of degree $m+1$, the proportion of such components of $f$ goes to zero as $m$ increases.

\section{Polynomials with small rank}

Theorem \ref{thm: result on signature} and its corollaries give lower bounds on $P/R$, and hence upper bounds on $N/P$. In a sense, these results say which signatures are possible if $r(z,\bar{z})\norm{z}^{2d}$ is a squared norm and $r$ has large rank; in \cite{HL:sig}, those polynomials for which $P/R$ is close to $1/D_d$ have large degree and rank. By contrast, our next two propositions explore the situation in which $r(z,\bar{z}) \norm{z}^{2d}$ is a squared norm but the rank of $r$ and the ratio $N/P$ are small.

\begin{prop}\label{prop: ranks not possible} If $r$ is the squared norm of a holomorphic homogeneous polynomial mapping, then either the rank $\rho$ of $r(z,\bar{z})\norm{z}^{2d}$ satisfies
    \begin{equation}\label{eq: rank must satisfy}
    n P - \frac{P(P-1)}{2}  \leq \rho \leq nP
    \end{equation}
    for some $P \in \N$ with $P<n$, or  $\rho \geq n(n+1)/2$.
\end{prop}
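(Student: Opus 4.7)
The plan is to write $r = \norm{f}^2$ where $f = (f_1, \ldots, f_R)$ has linearly independent components in $\sP_m$, so that $r\norm{z}^{2d} = \norm{f \otimes z^{\otimes d}}^2$ and the rank $\rho$ equals $\dim(V_f \cdot \sP_d) = H_I(m+d)$, where $I \subseteq S$ is the ideal generated by the $f_j$. The argument then splits on $d$ and on $R$: the interval bound \eqref{eq: rank must satisfy} will correspond to $d = 1$ and $P = R < n$, while the tail bound $\rho \ge n(n+1)/2$ will cover all $d \ge 2$ as well as the case $d = 1$, $R \ge n$.

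For $d \ge 2$ the tail bound is automatic: for any nonzero $g \in V_f$, multiplication by $g$ is injective on $\sP_d$ (since $S$ is an integral domain), so $g \cdot \sP_d \subseteq V_f \cdot \sP_d$ has dimension $D_d \ge D_2 = n(n+1)/2$. For $d = 1$, the upper bound $\rho \le nR$ is immediate because $V_f \cdot \sP_1$ is spanned by the $nR$ products $f_j z_i$. What remains is a lower bound on $\rho$ when $d = 1$.

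For this I would apply Macaulay's estimate (Theorem \ref{thm: Macaulay}) to $I$ at degree $m$, where $H_{S/I}(m) = D_m - R$, to obtain $H_{S/I}(m+1) \le (D_m - R)^{<m>}$ and hence $\rho \ge D_{m+1} - (D_m - R)^{<m>}$. The key computation is to evaluate this for $1 \le R \le n$: using the identity $D_m = \sum_{k=0}^m \binom{k+n-2}{k}$ from the proof of Corollary \ref{cor: first}, I expect to verify that the $m$-th Macaulay representation of $D_m - R$ is
\[
D_m - R = \binom{n+m-2}{m} + \binom{n+m-3}{m-1} + \cdots + \binom{n}{2} + \binom{n-R}{1}
\]
(dropping the last term when $R = n$), then increment each term to get $(D_m - R)^{<m>}$, and subtract from $D_{m+1}$. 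For $R < n$ this should collapse to $nR - R(R-1)/2$, placing $\rho$ in $[nR - R(R-1)/2,\, nR]$ and giving the interval bound with $P = R$; for $R = n$ it yields $\rho \ge n(n+1)/2$; and for $R > n$, statement (i) of Lemma \ref{lemma: order preserved} ensures $(D_m - R)^{<m>} < (D_m - n)^{<m>}$, so again $\rho > n(n+1)/2$. The main obstacle will be this Macaulay bookkeeping: confirming that the displayed expansion satisfies the strict-decrease condition $k_m > k_{m-1} > \cdots > k_J \ge J$ so that it qualifies as the Macaulay representation, and then executing the telescoping to arrive cleanly at $nR - \binom{R}{2}$.
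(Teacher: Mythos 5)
Your proposal is correct, and its core — applying Macaulay's estimate at degree $m$ to the ideal generated by the components of $f$, computing the $m$-th Macaulay representation of $D_m - R$ as $\binom{n+m-2}{m}+\cdots+\binom{n}{2}+\binom{n-R}{1}$, and telescoping against $D_{m+1}$ to get the lower bound $nR - R(R-1)/2$ (with the monotonicity of $c \mapsto c^{<m>}$ handling $R \ge n$) — is exactly the paper's argument. The only divergence is the treatment of $d$: the paper reduces to $d=1$ by replacing $r$ with $\tilde r = r\norm{z}^{2(d-1)}$, which is again a squared norm of a homogeneous mapping, whereas you dispose of $d \ge 2$ directly by noting that $g \cdot \sP_d$ injects into $(V_f)_{m+d}$ for any nonzero $g \in V_f$, giving $\rho \ge D_d \ge n(n+1)/2$. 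Both reductions are valid; yours has the small added benefit of showing that for $d \ge 2$ only the tail alternative $\rho \ge n(n+1)/2$ can occur, while the paper's keeps the entire statement uniform in $d$ at the cost of the quantifier "for some $P$" referring to the rank of $\tilde r$ rather than of $r$.
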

\begin{remark}
In other words, for a fixed dimension $n$, if $r$ is a squared norm, not all ranks of $r(z,\bar{z})\norm{z}^{2d}$ are possible; the smallest possible (non-zero) rank is $n$, and the next smallest are $2n-1$ and $2n$.  For example, when $n=3$, ranks $1$, $2$, and $4$ are not possible.
\end{remark}
\begin{proof} If $r$ is a squared norm, so is $\tilde{r}(z,\bar{z})=r(z,\bar{z})\norm{z}^{2(d-1)}$.  Thus by replacing $r$ by $\tilde{r}$ if necessary, it suffices to prove the result for $d=1$.

Suppose $r$ is the squared norm of the homogeneous holomorphic mapping $f=(f_1,\ldots,f_P)$ so that
\begin{equation}
H_{V_f}(m+1)\leq n P.
\end{equation}
We use Macaulay's estimate to obtain a lower bound.

Suppose first that $P < n$.
Then
\begin{eqnarray*}
\left(D_{m} - P\right)^{<m>}&=& \left( \sum_{k=1}^m \binom{k+ n-2}{k} +1- P\right)^{<m>}\\
&=&\left(\sum_{k=2}^m \binom{k+n-2}{k} + \binom{n- P}{1} \right)^{<m>}\\
&=&\sum_{k=2}^{m} \binom{k+n-1}{k+1}+\binom{n-P+1 }{ 2}\\
&=&\sum_{k=1}^{m+1}\binom{k+n-2}{k}+\binom{n-P+1 }{ 2} - (n-1)-\binom{n}{ 2}\\
&=&D_{m+1}-\left(\frac{n^2+n}{2}-\frac{(n-P+1)(n-P)}{2}\right).
\end{eqnarray*}
We obtain the lower bound
\begin{equation}
H_{V_f}(m+1) \geq \frac{n^2+n}{2}-\frac{(n-P+1)(n-P)}{2}=nP-\frac{P(P-1)}{2}.
\end{equation}
Thus for $P<n$,
$$nP - \frac{P(P-1)}{2} \leq \rho \leq nP.$$

Next suppose $ P \geq n$. Then
$$H_{S/V_f}(m+1) \leq (H_{S/V_f}(m))^{<m>} \leq (D_{m}-n)^{<m>}=D_{m+1}-\frac{n(n+1)}{2},$$
and $H_{V_f}(m+1) \geq n(n+1)/2$.
\end{proof}

Finally, we consider $r$ of signature $(P,1)$. In \cite{HL:sig}, the authors prove that if, in the holomorphic decomposition of such an $r$, the components of $f$ and $g$ are monomials, then $P \geq n$. Furthermore, it is clear that signature pair $(n,1)$ is possible.

We consider this question in the general situation in which the components of $f$ and $g$ are arbitrary homogeneous polynomials of degree $m$.  We use Macaulay's estimate to show that certain values for $(V_f)_k$ and $(V_{f \oplus g})_k$ are not possible if these spaces are known to agree for $k \geq m+1$.
\begin{prop}\label{prop: (P,1)}
 Let $r$ be bihomogeneous of bidegree $(m,m)$ with $r(z, \bar{z}) \norm{z}^2$ a squared norm.
If $N=1$, then
\begin{equation}
P^2+P \geq 2 n.
\end{equation}
\end{prop}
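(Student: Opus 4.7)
The plan is to adapt the strategy of Proposition \ref{prop: ranks not possible}: apply Macaulay's estimate to obtain a lower bound on $H_{V_{f\oplus g}}(m+1)$, compare it against an easy upper bound on $H_{V_f}(m+1)$, and use Lemma \ref{prop: VG in VF} to identify these two Hilbert-function values. Write $r=\norm{f}^2-|g|^2$ with $f=(f_1,\ldots,f_P)$ and a single component $g$. Applying Lemma \ref{prop: VG in VF} with $d=1$ shows that $(V_g)_{m+1}\subseteq (V_f)_{m+1}$, so
\[
H_{V_{f\oplus g}}(m+1) = H_{V_f}(m+1).
\]
Since the $nP$ polynomials $z_i f_j$ span $(V_f)_{m+1}$, this common value is at most $nP$. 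On the other hand, $\{f_1,\ldots,f_P,g\}$ is linearly independent, so $H_{V_{f\oplus g}}(m)=P+1$, and Macaulay's theorem yields
\[
H_{V_{f\oplus g}}(m+1) \;\geq\; D_{m+1} - (D_m-P-1)^{<m>}.
\]

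To evaluate the right-hand side when $P\leq n-1$, I would use the identity $D_m=\sum_{k=0}^{m}\binom{k+n-2}{k}$ (already used in the proof of Corollary \ref{cor: first}) to write
\[
D_m-P-1 \;=\; \binom{n-1-P}{1} + \sum_{k=2}^{m}\binom{k+n-2}{k}.
\]
This is a valid $m$-th Macaulay representation: the upper indices $n-1-P,\,n,\,n+1,\ldots,m+n-2$ are strictly increasing, and the smallest one is $\geq 1$ precisely when $P\leq n-2$ (the first term is simply absent when $P=n-1$). Applying $c\mapsto c^{<m>}$ termwise and telescoping yields the uniform closed form
\[
D_{m+1} - (D_m-P-1)^{<m>} \;=\; n+\binom{n}{2}-\binom{n-P}{2}
\]
for all $P\leq n-1$. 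Combining this lower bound with the upper bound $nP$ and simplifying collapses precisely to $P^2+P\geq 2n$.

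It remains to dispose of the residual cases. For $P\geq n$ the inequality $P^2+P\geq n^2+n\geq 2n$ is immediate. For $P=n-1$ with $n\geq 3$, directly $P^2+P=n(n-1)\geq 2n$. The only remaining configuration is $n=2$, $P=1$, where the closed form above gives $H_{V_{f\oplus g}}(m+1)\geq 3$ against the upper bound $nP=2$; this contradiction shows that no such $r$ exists and the inequality holds vacuously. I expect the main obstacle to be the Macaulay-representation bookkeeping—specifically, verifying the strict-monotonicity condition on the indices and correctly handling the threshold at which the representation loses its final summand.
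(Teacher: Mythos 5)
Your proposal is correct and follows essentially the same route as the paper: identify $H_{V_{f\oplus g}}(m+1)$ with $H_{V_f}(m+1)\le nP$ via Lemma \ref{prop: VG in VF}, compute the $m$-th Macaulay representation of $D_m-(P+1)$, and compare the resulting lower bound $n+nP-\tfrac{P^2+P}{2}$ with $nP$. The only (harmless) differences are that you package the two cases $P=n-1$ and $P<n-1$ into one closed form and append some redundant residual-case checks.
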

\begin{proof}
 Since $H_{V_f}(m)=P$ and $H_{V_{f \oplus g}}(m)=P + 1$, $H_{S/V_{f \oplus g}}(m)=D_{m} - (P +1)$.

We know that it is possible to take $P=n$.  Thus our concern in this proposition is whether the Macaulay estimates themselves rule out the possibility of smaller $ P $.  Thus we consider $P<n$ and compare upper and lower bounds for $H_{ S /V_{f \oplus g}}(m+1)$. We expect to see that if $P$ is too small, the lower bound given by the Macaulay estimates will be larger than our known upper bound.

To begin, we need the $m$-th Macaulay representation of $D_{m} - ( P+1)$, where $ P < n$.  We begin as in the previous proposition, writing
\begin{eqnarray}
\left( D_{m} - (P+1)\right)^{<m>}&=& \left( \sum_{k=1}^m \binom{k+ n-2}{k} - P\right)^{<m>} \nonumber\\
&=&\left(\sum_{k=2}^m \binom{k+n-2}{ k} + \binom{n-1 - P}{1} \right)^{<m>} \label{eq: 2 cases needed}.
\end{eqnarray}
We consider two cases.  First, suppose $P=n-1$.  Then
\begin{eqnarray*}
\eqref{eq: 2 cases needed}&=&\sum_{k=2}^m \binom{k+n-1}{ k+1}\\
&=&\sum_{k=1}^{m+1}\binom{k+n-2}{k}-\binom{n}{2}-\binom{n-1}{1}\\
&=&D_{m+1}-\frac{n(n+1)}{2}\\
&=&D_{m+1}-\frac{(P+1)(P+2)}{2}.
\end{eqnarray*}
For the second case, suppose $P<n-1$.  We obtain
\begin{eqnarray*}
\eqref{eq: 2 cases needed}&=&\sum_{k=2}^{m} \binom{k+n-1}{ k+1}+\binom{n-P }{ 2}\\
&=&D_{m+1}-\left(n+nP-\frac{P^2+P}{2} \right)
\end{eqnarray*}
Since for $P=n-1$
$$n+nP-\frac{P^2+P}{2}=\frac{(P+1)(P+2)}{2} ,$$
for any $P<n$
we obtain the lower bound
\begin{equation}
H_{S/V_{f\oplus g}}(m+1) = n+nP-\frac{P^2+P}{2}.
\end{equation}
 As always, we trivially have the upper bound
 $$H_{V_{f \oplus g}}(m+1) \le n P  .$$
 Thus necessarily
 \begin{equation}
 n+nP-\frac{P(P+1)}2{} \leq nP.
 \end{equation}
 Thus we require
 $$   2n\leq P^2+P .$$
\end{proof}

The two propositions of this section illustrate what information about bihomogeneous polynomials can be gained through the use of this algebraic tool; rather than establishing the existence of bihomogeneous polynomials with certain algebraic properties, Macaulay's estimate allows us to show that certain scenarios are impossible because of necessary relationships between the dimensions of various vector spaces naturally associated with a polynomial.

\bibliographystyle{alpha}
\bibliography{references}

\end{document}